%
%
%
\documentclass[12pt]{article}
\usepackage[german,english]{babel}
\usepackage[latin1]{inputenc}
\usepackage{times,amsmath,amssymb}
\pagestyle{empty}
\parindent0cm
\parskip0.6em
\topmargin-1.3cm \oddsidemargin0cm \evensidemargin0cm
\textwidth16cm \textheight23.2cm

\usepackage{amsfonts,graphicx}
\newtheorem{thm}{Theorem}

\newcommand{\DD}{{\mathbb D}}
\newcommand{\LL}{{\mathbb L}}
\newcommand{\PP}{{\mathbb P}}
\newcommand{\RR}{{\mathbb R}}
\newcommand{\cC}{{\mathcal C}}
\newcommand{\cD}{{\mathcal D}}
\newcommand{\cL}{{\mathcal L}}
\newcommand{\cS}{{\mathcal S}}

\newcommand{\dis}
                 {{\mathrel{\scriptstyle{\triangle}}}}
\newcommand{\eps}{{\varepsilon}}
\newcommand{\GL}{{\mathrm{GL}}}
%
\newcommand{\welle}[1]{{\widetilde{#1}}}
\let\phi=\varphi

\let\theta=\vartheta

\newcommand{\DelimArray}[4]{\left#1\begin{array}{*{#3}{c}}#4\end{array}\right#2}

\newcommand{\Mat}{\DelimArray()}

\newenvironment{proof}
    {\begin{trivlist} \item {\emph{Proof.}}} 
    {{}\hfill $\square$ \end{trivlist}}
\newcounter{abbildung} 
\begin{document}
\selectlanguage{english}

\centerline{\Large\bf A Three-Dimensional Laguerre Geometry}
\centerline{\Large\bf and Its Visualization} \vspace*{1.1cm}

 \centerline{\large Hans Havlicek and Klaus List, Institut f{\"u}r Geometrie, TU Wien}

 \vspace*{0.8cm}

{\small We describe and visualize the chains of the
$3$-dimensional chain geometry over the ring $\RR(\eps)$,
$\eps^3=0$.\\
\emph{MSC 2000}: 51C05, 53A20.\\
\emph{Keywords}: chain geometry, Laguerre geometry, affine space,
twisted cubic. }
%
%
%
%
%
%
%
%

\section{Introduction}\label{sect:intro}

The aim of the present paper is to discuss in some detail the
\emph{Laguerre geometry}\/ (cf.\ \cite{benz-73}, \cite{herz-95})
which arises from the $3$-dimensional real algebra
$\LL:=\RR(\eps)$, where $\eps^3=0$. This algebra generalizes the
algebra of real dual numbers $\DD=\RR(\eps)$, where $\eps^2=0$.
The Laguerre geometry over $\DD$ is the geometry on the so-called
\emph{Blaschke cylinder}\/ (Figure \ref{abb:1}); the
non-degenerate conics on this cylinder are called \emph{chains}\/
(or \emph{cycles, circles}\/). If one generator of the cylinder is
removed\linebreak
\begin{minipage}[t]{11.2cm}
then the remaining points of the cylinder are in one-one
correspondence  (via a \emph{stereographic projection}\/) with the
points of the plane of dual numbers, which is an isotropic plane;
the chains go over to circles and non-isotropic lines. So the
point space of the chain geometry over the real dual numbers can
be considered as an affine plane with an extra ``improper line''.
\par
The Laguerre geometry based on $\LL$ has as point set the
projective line $\PP(\LL)$ over $\LL$. It can be seen as the real
affine $3$-space on $\LL$ together with an ``improper affine
plane''. There is a point model for this geometry, like the
Blaschke cylinder, but it is more complicated, and belongs to a
$7$-dimensional projective space (\cite[p.~812]{herz-95}). We are
not going to use it. Instead, we describe $\PP(\LL)$ as an
extension of the affine space on $\LL$ by ``improper points''
which will be described via lines, parabolas, and cubic parabolas.
\end{minipage}\hfill{\unitlength1cm
\begin{minipage}[t]{4.8cm}
\vspace*{-0mm}
      \begin{center}
         \begin{picture}(3.91,6)
         \put(0.0 ,0.0){\includegraphics[height=6cm]{./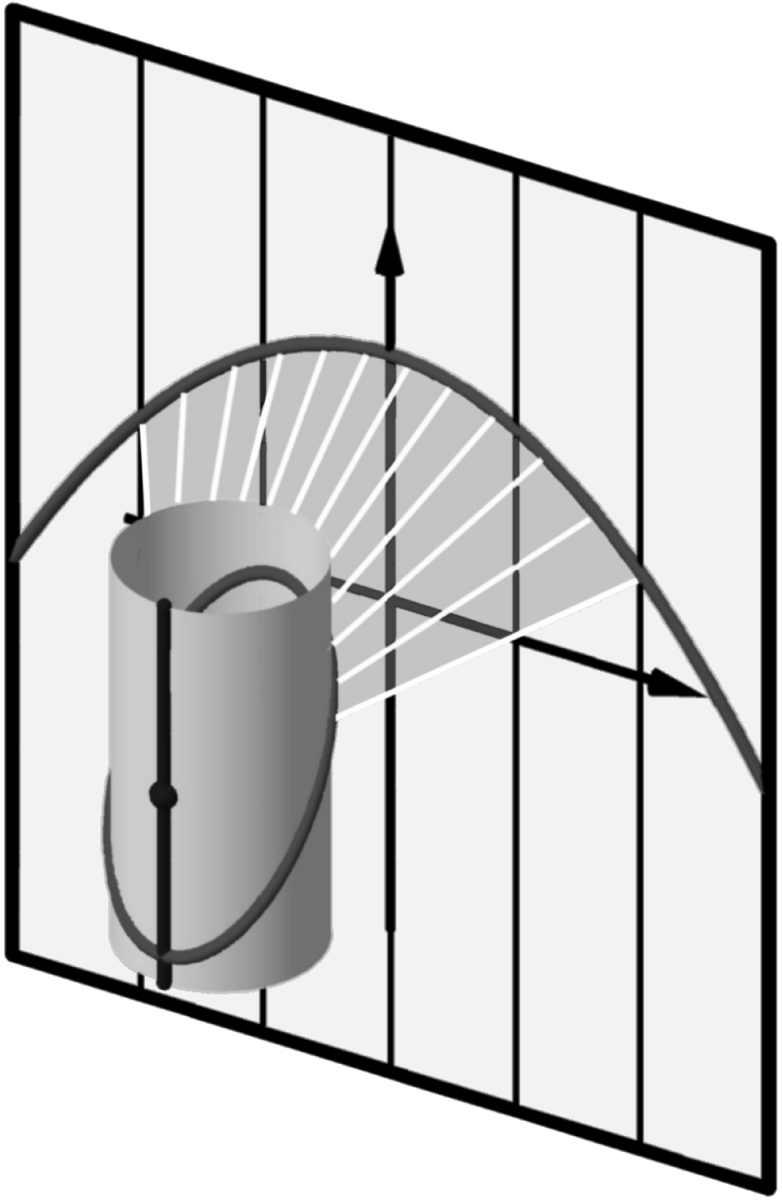}}
         \put(3.3,2.1){\small$\RR$}
         \put(2.05,4.8){\small$\RR\eps$}
         \end{picture}
         \refstepcounter{abbildung}\label{abb:1}%
         {\small Figure \ref{abb:1}}%
      \end{center}
      \end{minipage}
         }%

\section{Higher order contact of twisted cubics}\label{sect:kubiken}

Here we present some results which will be needed in Section
\ref{sect:laguerre}. We refer to \cite{bol-50}, \cite{brau-76-2},
and \cite{semp+k-98} for the basic properties of twisted cubics in
the real projective space $\PP_3(\RR)$.
\begin{thm}\label{thm:kubik}
Let $C$ and $\welle C$ be twisted cubics of\/ $\PP_3(\RR)$ with a
common point $f$, a common tangent line $F$ at $f$, and a common
osculating plane $\Phi$ at $f$. Then a collineation of\/
$\PP_3(\RR)$ taking $C$ to $\welle C$ is uniquely determined by
each of the following conditions:\\
\begin{tabular}{ll}
 \emph{(I)} & All lines of the pencil $\cL(f,\Phi)$ are invariant.\\
 \emph{(II)} & All points of the line $F$ are invariant.\\
 \emph{(III)} & All planes of the pencil with axis $F$ are invariant.
\end{tabular}
\end{thm}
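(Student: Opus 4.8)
\emph{Sketch of proof.} The plan is to normalise coordinates, cut the situation down to the two--parameter family of collineations fixing the common flag, and then read off each of the three conditions as the vanishing of a suitable $2\times 2$ block of a triangular matrix.

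First I would choose homogeneous coordinates in $\PP_3(\RR)$ so that $C$ is the standard twisted cubic $t\mapsto(1:t:t^2:t^3)$ together with the point $(0:0:0:1)$, with $f=(1:0:0:0)$ the point of parameter $t=0$; writing $e_0,\dots,e_3$ for the standard basis, we then have $f=\langle e_0\rangle$, $F=\langle e_0,e_1\rangle$, $\Phi=\langle e_0,e_1,e_2\rangle$. The first observation is that each of (I), (II), (III) forces an admissible collineation $\kappa$ to leave $F$ invariant: in case (I) because $F$, being the tangent, lies in $\Phi$ and passes through $f$, so $F\in\cL(f,\Phi)$; in case (II) trivially; in case (III) because $F$ is the axis. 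Since $F$ is the common tangent of $C$ and $\welle C$ at $f$, we have $F\cap C=\{f\}=F\cap\welle C$ as point sets, whence $\kappa(f)=\kappa(F\cap C)=F\cap\welle C=\{f\}$; then $\kappa$ carries the osculating plane of $C$ at $f$ to that of $\welle C$ at $\kappa(f)=f$, i.e.\ $\kappa(\Phi)=\Phi$. Thus it suffices to treat collineations with $\kappa(C)=\welle C$ and $\kappa(f)=f$ and to show that each of the three conditions singles out exactly one of them, which will give both existence and uniqueness.

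Next I would parametrise $\welle C$ adapted to the flag: passing through $f$ at $s=0$ with tangent $F$ and osculating plane $\Phi$ forces the triangular normal form
\[
\welle C:\quad s\ \longmapsto\ \bigl(\,1+\tilde c_{01}s+\cdots\ :\ \alpha s+\tilde c_{12}s^2+\cdots\ :\ \beta s^2+\tilde c_{23}s^3\ :\ \gamma s^3\,\bigr),\qquad \alpha\beta\gamma\neq 0.
\]
By the classical projective theory of twisted cubics (cf.\ the references cited above), a collineation with $\kappa(C)=\welle C$ and $\kappa(f)=f$ restricts to a projectivity of the parameter lines fixing $0$, hence sends the point of $C$ with parameter $t$ to the point of $\welle C$ with parameter $\varphi(t)=pt/(1+rt)$, $p\neq 0$; conversely, every pair $(p,r)$ with $p\neq 0$ arises in this way. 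Multiplying through by $(1+rt)^3$, such a $\kappa$ is represented by the matrix $M=M(p,r)$ whose $i$-th row lists the coefficients of the cubic $t\mapsto(1+rt)^3\,\tilde c_i\bigl(pt/(1+rt)\bigr)$ in the basis $1,t,t^2,t^3$. A short computation shows that $M(p,r)$ is upper triangular with diagonal $\bigl(1,\ \alpha p,\ \beta p^2,\ \gamma p^3\bigr)$ and superdiagonal entries $M_{01}=3r+\tilde c_{01}p$, $M_{12}=2\alpha pr+\tilde c_{12}p^2$, $M_{23}=\beta p^2r+\tilde c_{23}p^3$.

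Finally each condition becomes transparent. Condition (II) says $M$ acts as a scalar on $F$, i.e.\ $M_{00}=M_{11}$ and $M_{01}=0$, forcing $p=1/\alpha$ and $r=-\tilde c_{01}/(3\alpha)$. Condition (I) says $M$ induces the identity on $\cL(f,\Phi)\cong\PP(\Phi/\langle e_0\rangle)$, i.e.\ the block $\left(\begin{smallmatrix}M_{11}&M_{12}\\0&M_{22}\end{smallmatrix}\right)$ is scalar: $M_{11}=M_{22}$ and $M_{12}=0$, forcing $p=\alpha/\beta$ and $r=-\tilde c_{12}/(2\beta)$. Condition (III) says $M$ induces the identity on the pencil of planes through $F\cong\PP(\RR^4/F)$, i.e.\ the bottom--right $2\times 2$ block $\left(\begin{smallmatrix}M_{22}&M_{23}\\0&M_{33}\end{smallmatrix}\right)$ is scalar: $M_{22}=M_{33}$ and $M_{23}=0$, forcing $p=\beta/\gamma$ and $r=-\tilde c_{23}/\gamma$. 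In every case $(p,r)$, and hence $\kappa$, is uniquely determined, and the resulting $M(p,r)$ is invertible because its diagonal entries are nonzero. The step I expect to be the crux is the passage to the adapted parametrisation together with the classical fact that an admissible collineation induces a projectivity of the parameter lines — i.e.\ pinning down that the admissible collineations are exactly the explicit two--parameter family $M(p,r)$; once that is available, each of (I)--(III) reduces to two equations in $(p,r)$ with a unique solution and the rest is routine.
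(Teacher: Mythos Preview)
Your argument is correct, but it follows a genuinely different route from the paper's. The paper proceeds synthetically: for case (I) it uses that the tangent surface of $C$ meets $\Phi$ in $F$ together with a conic $K$ (tangent to $F$ at $f$), and likewise $\welle C$ yields a conic $\welle K$; for two chosen points $r_i\in C\setminus\{f\}$ the tangents meet $\Phi$ in $k_i\in K$, the lines $f\vee k_i$ cut $\welle K$ residually in points $\welle k_i$, and these determine points $\welle r_i\in\welle C$ to which any type-(I) collineation must send $r_i$; the unique collineation with $C\mapsto\welle C$, $f\mapsto f$, $r_i\mapsto\welle r_i$ is then checked to fix three, hence all, lines of $\cL(f,\Phi)$. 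Cases (II) and (III) run the same scheme, using osculating planes meeting $F$ and residual intersections with planes of the pencil through $F$. Your approach instead parametrises the whole two-parameter family of collineations $C\to\welle C$ fixing $f$ by the induced M\"obius map $t\mapsto pt/(1+rt)$ and reads each of (I)--(III) as scalarity of the appropriate $2\times2$ diagonal block of the triangular matrix $M(p,r)$, giving two equations with a unique solution. This is more computational but delivers existence and uniqueness in one stroke and produces the triangular matrix normal forms directly (the paper records these only afterwards); the paper's argument, by contrast, is coordinate-free and exhibits the geometric mechanism behind the rigidity. A harmless difference of convention: you place $f$ at $(1{:}0{:}0{:}0)$, whereas the paper takes $f=\RR(0,0,0,1)$.
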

\begin{proof}
(I) We recall that distinct tangent lines of a twisted cubic are
skew. The tangent surface of $C$ intersects the osculating plane
$\Phi$ in a curve which is the union of $F$ and a conic $K$
through $f$; the line $F$ is tangent to $K$. Likewise the tangent
surface of $\welle C$ yields a conic $\welle K$. Let $r_i$,
$i\in\{1,2\}$, be distinct points of $C\setminus\{f\}$. The
tangent lines of $C$ at these points meet the plane $\Phi$ at
points $k_i\in K\setminus\{f\}$, whence the lines $L_i:=f\vee k_i$
are distinct. These lines meet $\welle K$ residually at points
$\welle k_i$ which in turn are incident with tangent lines of
$\welle C$ at distinct points $\welle r_i$. These points $\welle
r_i$ are determined uniquely. So, every collineation of type (I)
takes $r_i$ to $\welle r_i$, and $f$ to $f$. Conversely, there is
a unique collineation $\kappa$ of $\PP_3(\RR)$ with
$C^\kappa=\welle C$, $r_i^\kappa=\welle r_i$, and $f^\kappa=f$.
Since $F$, $f\vee k_1$, and $f\vee k_2$ remain invariant under
$\kappa$, all lines of the pencil $\cL(f,\Phi)$ remain fixed. So
this $\kappa$ is the only collineation with the required
properties.
\par
(II) The proof runs in a similar manner. The osculating planes at
$r_i$ meet $F$ at points $k_i\neq f$. Now $\welle r_i\in \welle
C\setminus\{f\}$ can be chosen such that their osculating planes
meet the line $F$ at $k_i$.
\par
(III) Each of the planes $F\vee r_i$ meets the twisted cubic
$\welle C$ residually at a point $\welle r_i$. Now we can proceed
as above.
\end{proof}

Let $p_0$, $p_3$, and $p$ be three distinct points of $C$. Define
the point $p_1$ as the intersection of the tangent line at $p_0$
with the osculating plane at $p_3$. Likewise, by changing the role
of $p_0$ and $p_3$, a point $p_2$ is obtained. Then
$(p_0,p_1,p_2,p_3,p)$ is a frame of reference such that
\begin{equation}\label{eq:kubik-homogen}
  C = \{\RR(s^3,s^2t,st^2,t^3) \mid (0,0)\neq (s,t)\in\RR^2\}.
\end{equation}
We assume that $f=p_3=\RR(0,0,0,1)$, whence $F$ is given by
$x_0=x_1=0$ and $\Phi$ has an equation $x_0=0$. A collineation of
$\PP_3(\RR)$ is of type (I), (II) or (III) if, and only if, it has
a regular matrix with one of the following forms:
\begin{equation}\label{eq:matrizen}
\mbox{(I): }
  \Mat4{1 & a_{01} &a_{02} & a_{03}\\
        0 & a_{11} &0      & a_{13}\\
        0 & 0      &a_{11} & a_{23}\\
        0 & 0      &0      & a_{33}
    }
    ,\mbox{ (II): }
  \Mat4{1 & a_{01} &a_{02} & a_{03}\\
        0 & a_{11} &a_{12} & a_{13}\\
        0 & 0      &a_{22} & 0     \\
        0 & 0      &0      & a_{22}
    },\mbox{ (III): }
    \Mat4{1 & 0 &a_{02} & a_{03}\\
          0 & 1 &a_{12} & a_{13}\\
          0 & 0         &a_{22} & a_{23}\\
          0 & 0         &0      & a_{33}
    }
\end{equation}
Next we describe higher-order contact of twisted cubics; cf.\ also
\cite[pp.~211--219]{bol-50}.

\begin{thm}\label{thm:2}
Let $C$ be the twisted cubic (\ref{eq:kubik-homogen}) and let
$\kappa$ be a collineation of\/ $\PP_3(\RR)$ given by one of the
matrices (\ref{eq:matrizen}). Then the conditions stated in the
first row, in the first and the second row, and in all rows of the
table below are necessary and sufficient for the twisted cubics
$C$ and $C^\kappa$ to have contact at the point $f=\RR(0,0,0,1)$
of order\/ $2$, $3$, and\/ $4$, respectively.
\begin{equation}\label{eq:tabelle}
\begin{array}{|c|ll||ll||ll|}
  \hline
  &\multicolumn{2}{c||}{\mbox{\emph{(I)}}}
                      &\multicolumn{2}{c||}{\mbox{\emph{(II)}}}
                      &\multicolumn{2}{c|}{\mbox{\emph{(III)}}}\\
  \hline
   1&a_{33}=a_{11}&              &a_{22}=a_{11}&             & a_{33}=a_{22}^2& \\
   2&a_{11}=1,& a_{23}=-a_{01} &a_{11}=1,& a_{01}=2a_{12}& a_{22}=1,& a_{23}=2a_{12}\\
   3&a_{01}=0,& a_{13}=2a_{02} &a_{12}=0,& a_{13}=2a_{02}& a_{12}=0,& a_{13}=2a_{02}\\
  \hline
\end{array}
\end{equation}
\end{thm}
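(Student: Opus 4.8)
The plan is to turn the statement into an identity between power series, computed in the affine chart of $\PP_3(\RR)$ where the last homogeneous coordinate equals $1$. In this chart (and in the frame of reference of (\ref{eq:kubik-homogen})) the point $f=\RR(0,0,0,1)$ is the origin, the twisted cubic $C$ becomes the curve $s\mapsto(s^3,s^2,s)$, that is, the graph $x=z^3$, $y=z^2$ over the $z$-axis, where $(x,y,z)$ are the affine coordinates; the line $F$ is the $z$-axis, and the osculating plane $\Phi$ is $\{x=0\}$. Each matrix in (\ref{eq:matrizen}) fixes $f$ and leaves $F$ and $\Phi$ invariant (a direct inspection; for types (I) and (III) this is immediate from the properties used in Theorem \ref{thm:kubik}, and for type (II) the matrix even fixes $F$ pointwise). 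Hence $C^\kappa$ again runs through the origin with the $z$-axis as tangent, so near the origin it has a graph representation $x=\phi(z)$, $y=\psi(z)$ with $\phi$ and $\psi$ vanishing to second order at $z=0$; and since $\Phi$ is invariant it is also the osculating plane of $C^\kappa$ at $f$, so in fact $\phi$ vanishes to third order. Under these conventions, ``$C$ and $C^\kappa$ have contact of order $k$ at $f$'' means exactly that $\phi(z)-z^3$ and $\psi(z)-z^2$ vanish to order at least $k+1$ at $z=0$.

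To carry this out I would, for each of the three matrix types, apply $\kappa$ to the parametrization $s\mapsto\RR(s^3,s^2,s,1)$ of $C$, pass to the affine chart, and read off the third coordinate $z=z(s)$; this is a power series in $s$ with nonzero linear term, hence invertible, and substituting $s=s(z)$ into the first two coordinates yields $\phi$ and $\psi$ as explicit power series in $z$ whose coefficients are polynomials in the $a_{ij}$. Equating the coefficients of $z^2$, then of $z^3$, then of $z^4$ in $\psi(z)=z^2$ and $\phi(z)=z^3$ produces, in this order, the entries of the first, of the first two, and of all three rows of (\ref{eq:tabelle}). For instance, in case (III) one gets $z=(a_{22}/a_{33})\,s+\cdots$, so the coefficient of $z^2$ in $\psi$ equals $a_{33}/a_{22}^2$; setting it equal to $1$ gives the first-row condition $a_{33}=a_{22}^2$, after which the coefficient of $z^3$ in $\phi$ forces $a_{22}=1$, that of $z^3$ in $\psi$ forces $a_{23}=2a_{12}$, and the two coefficients of $z^4$ give the last row. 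Cases (I) and (II) run along exactly the same lines (alternatively, case (III) can be deduced from case (II) by projective duality combined with the reflection $s\leftrightarrow t$ of $C$). Because at each stage the coefficient equation has precisely the solution recorded in the table, necessity and sufficiency come out together.

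The argument is elementary; the only real labour is the bookkeeping, since the inversion $s=s(z)$ has to be pushed to fourth order and then fed through three different matrices while tracking which entries $a_{ij}$ first appear at which order. The one point to keep in mind is that the matrices in (\ref{eq:matrizen}) are already normalized to carry a $1$ in the upper-left corner, so the freedom of rescaling the matrix has been used up; accordingly, conditions such as $a_{11}=1$ or $a_{22}=1$ are genuine second-order requirements rather than normalizations, and the quadratic form $a_{33}=a_{22}^2$ in case (III) is exactly what this pinned-down scaling produces.
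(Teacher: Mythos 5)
Your proposal is correct, and it reproduces the table (I spot-checked your case (III) computations: $z=(a_{22}/a_{33})s+\cdots$ does give the $z^2$-coefficient $a_{33}/a_{22}^2$ of $\psi$, and after normalizing $a_{22}=1$ the $z^3$-coefficient of $\psi$ is $a_{23}-2a_{12}$, exactly as in rows 1--2), but the route is genuinely different from the paper's. The paper never dehomogenizes or inverts the parameter: it writes $C$ as the intersection (near $f$) of the quadrics $G_1=x_0x_3-x_1x_2$ and $G_2=x_1x_3-x_2^2$, substitutes the parametrization $s\mapsto(s^3,s^2,s,1)\cdot A$ of $C^\kappa$ into $G_1$ and $G_2$, and invokes the criterion (cited from Bol) that contact of order $m$ at $f$ is equivalent to the vanishing of the coefficients of $s^0,\dots,s^m$ in both compositions; the table is then read off from two polynomial expansions, with no series inversion. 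Your approach instead uses the graph characterization of contact over the common tangent line, which is equally valid once you have checked -- as you do -- that each matrix type fixes $f$ and leaves $F$ and $\Phi$ invariant, so that $C^\kappa$ is again a graph $x=\phi(z)$, $y=\psi(z)$ near the origin; the price is the inversion $s=s(z)$ pushed to fourth order through three matrix types, while the gain is that the argument is self-contained and does not rely on the implicit-equation contact criterion. Two small remarks: you should state explicitly that your graph-difference definition of $k$-th order contact coincides with the notion used in the theorem (for two curves with common point and common tangent these agree, and your coefficient counts -- one condition at order $2$, two at each of orders $3$ and $4$ -- confirm the conventions match); and the parenthetical claim that case (III) follows from case (II) by duality plus the reflection $s\leftrightarrow t$ is plausible but unproved as stated (note the normalizations of the matrices differ), so either justify it or keep the direct computation for (III) as you in fact carried out.
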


\begin{proof}
The quadratic forms {\arraycolsep0.5mm\begin{eqnarray*}
  G_1:\RR^4\to\RR   &:& (x_0,x_1,x_2,x_3)\mapsto x_0x_3-x_1x_2,
   \\
  G_2:\RR^4\to\RR   &:& (x_0,x_1,x_2,x_3)\mapsto x_1x_3-x_2^2,
\end{eqnarray*}}
define a hyperbolic quadric $x_0x_3-x_1x_2=0$ and a quadratic cone
$x_1x_3-x_2^2=0$ with vertex $p_0$. Their intersection is the
twisted cubic $C$ and the line $x_2=x_3=0$. The tangent planes of
the two surfaces at $f$ are different. Let $\kappa$ be given by a
matrix $A$ of type (I). The mapping
\begin{equation*}
    g:\RR\to\RR^4 : s\mapsto (s^3,s^2,s,1)\cdot A
\end{equation*}
gives an arc of $C^\kappa$ containing the point $f$, which has the
parameter $s=0$. The products of $g$ with $G_i$ are functions
{\arraycolsep0.5mm\begin{eqnarray*}
  s&\mapsto& (-a_{11}^2+a_{33})s^3 +
             (-a_{01}a_{11}+a_{23})s^4+(*), \\
  s&\mapsto& (-a_{11}^2+a_{11}a_{33})s^2+
             (a_{01}a_{33}+a_{11}a_{23})s^3+
             (a_{01}a_{23}-2a_{11}a_{02}+a_{11}a_{13})s^4+(*),
\end{eqnarray*}}
where $(*)$ denotes terms of higher order in $s$. The twisted
cubics $C$ and $C^\kappa$ have contact of order $m$ at $f$ if, and
only if, in both functions the coefficients at
$s^0,s^1,\ldots,s^m$ vanish \cite[p.~147]{bol-50}. Now the
assertions follow easily, taking into account that $a_{11}\neq 0$
and $a_{33}\neq 0$.
\par
Similarly, if the matrix $A$ is of type (II) then the functions
{\arraycolsep0.5mm\begin{eqnarray*}
  s&\mapsto& (-a_{11}a_{22}+a_{22})s^3+
             (-a_{01}a_{22}-a_{11}a_{12})s^4+(*), \\
  s&\mapsto& (a_{11}a_{22}-a_{22}^2)s^2+
             (a_{01}a_{22}-2a_{12}a_{22})s^3+
             (-2a_{02}a_{22}+a_{11}a_{13}-a_{12}^2)s^4+(*),
\end{eqnarray*}}
are obtained, whereas for an $A$ of type (III) we get
{\arraycolsep0.5mm\begin{eqnarray*}
  s&\mapsto& (-a_{22}+a_{33})s^3+
             (-a_{12}+a_{23})s^4 + (*), \\
  s&\mapsto& (-a_{22}^2+a_{33})s^2+(-2a_{12}a_{22}+a_{23})s^3
            +(-2a_{02}a_{22}-a_{12}^2+a_{13})s^4 + (*).
\end{eqnarray*}}
As above, the results are immediate.
\end{proof}
Let us now consider $\Phi$ as \emph{plane at infinity}. Then our
projective frame of reference determines an affine coordinate
system in the usual way; a point $\RR(1,x_1,x_2,x_3)\in\PP_3(\RR)$
has affine coordinates $(x_1,x_2,x_3)$. It is our aim to describe
the results of Theorem \ref{thm:kubik} and Theorem \ref{thm:2} in
affine terms. From the affine point of view the twisted cubics $C$
and $C^\kappa$ are \emph{cubic parabolas}, projectively extended
by the point $f=p_3$. So this point of higher order contact is
\emph{outside} the affine space. In what follows an \emph{affine
transformation}\/ is understood to be a collineation fixing the
plane $\Phi$. We restrict ourselves to the description of higher
order contact via regular matrices of type (I). Such a matrix $A$
admits the following factorization:
\begin{equation}\label{eq:zerlegung}
    \Mat4{1 & 0 &0 &0 \\
        0 & 1 &0      & 0\\
        0 & 0      &1 & 0\\
        0 & 0      &0 & \frac{a_{33}}{\raisebox{0.8mm}{$\scriptstyle a_{11}$}}
    }\cdot
    \Mat4{1 & 0 &0 & 0\\
        0 & 1   &0 & \frac{a_{13}}{\raisebox{0.8mm}{$\scriptstyle a_{11}$}}\\
        0 & 0      &1 & \frac{a_{23}}{\raisebox{0.8mm}{$\scriptstyle a_{11}$}}\\
        0 & 0      &0      & 1
    }\cdot
    \Mat4{1 & 0 &0 & 0\\
        0 & a_{11} &0      & 0\\
        0 & 0      &a_{11} & 0\\
        0 & 0      &0      & a_{11}
    }\cdot
    \Mat4{1 & a_{01} &a_{02} & a_{03}\\
        0 & 1  &0      & 0\\
        0 & 0      &1 & 0\\
        0 & 0      &0      & 1
    }
\end{equation}
Conversely, if the entries $a_{ij}$ in (\ref{eq:zerlegung}) are
chosen arbitrarily, except for $a_{11},a_{33}\neq 0$, then a
regular matrix of type (I) is obtained. Formula
(\ref{eq:zerlegung}) corresponds to a decomposition of $\kappa$
into a \emph{perspective affinity}\/ with axis $x_3=0$ in the
direction of $p_3$, a \emph{shear}\/ with an axis through the line
$x_1=x_2=0$ in the direction of $p_3$, a \emph{stretching}\/
fixing the origin $p_0$ with scale factor $a_{11}$, and a
\emph{translation}\/ through the vector $(a_{01}, a_{02} ,
a_{03})$, respectively; this decomposition is uniquely determined.
\par
The matrix $A$ is of type (I.1) if, and only if, $a_{11}=a_{33}$,
i.e., the first matrix in (\ref{eq:zerlegung}) is the unit matrix.
The ultimate and the penultimate matrix in (\ref{eq:zerlegung})
together yield a \emph{dilatation}\/ and every dilatation arises
in this way. Hence, up to dilatations, we obtain all twisted
cubics which have second order contact with $C$ at $f$ by applying
to $C$ all shears with the properties mentioned above. Figure
\ref{abb:2} shows the twisted cubic $C$ and some of its images
under a group $\Sigma$ of shears in the direction of $p_3$ with
the common axis $x_1+x_2=0$. All these twisted cubics are on a
parabolic cylinder $\Psi$ ($x_1^2-x_2=0$) which is invariant under
the group $\Sigma$.
      {\unitlength1cm
\begin{center}
\begin{minipage}[t]{3.81cm}
         \begin{picture}(3.81,5)
         \put(0.0,0.0){\includegraphics[height=5cm]{./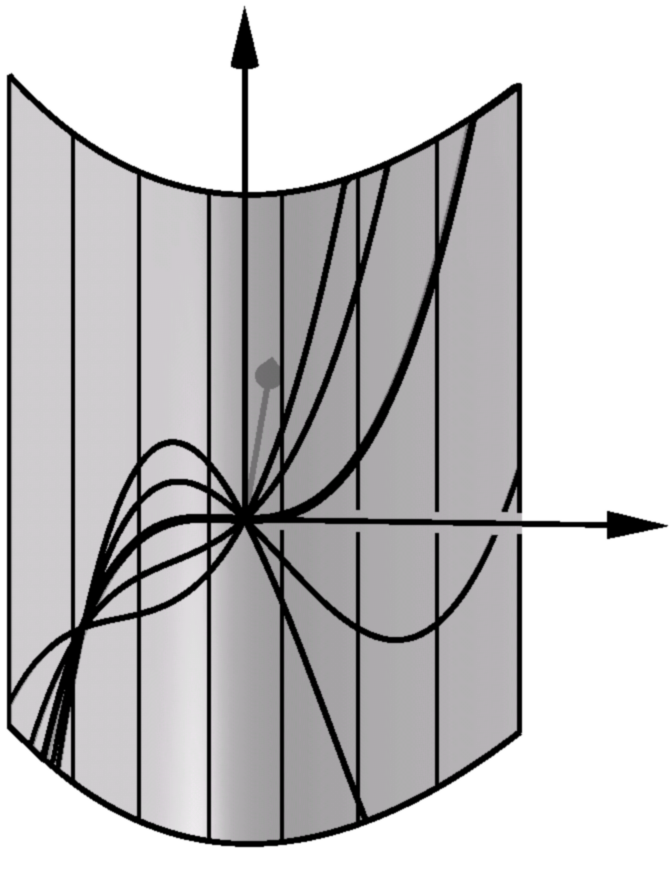}}
         \put(2.5,4.5){\small$C$}
         \put(3.5,2.2){\small$x_1$}
         \put(1.5,4.7){\small$x_3$}
         \put(3.1,3.6){\small$\Psi$}
         \end{picture}
         \refstepcounter{abbildung}\label{abb:2}
         \centerline{\small\hspace{-6mm}Figure \ref{abb:2}}
\end{minipage}
\hspace{.7cm}
\begin{minipage}[t]{4.09cm}
         \begin{picture}(4.09,5)
         \put(0.0 ,0.0){\includegraphics[height=5cm]{./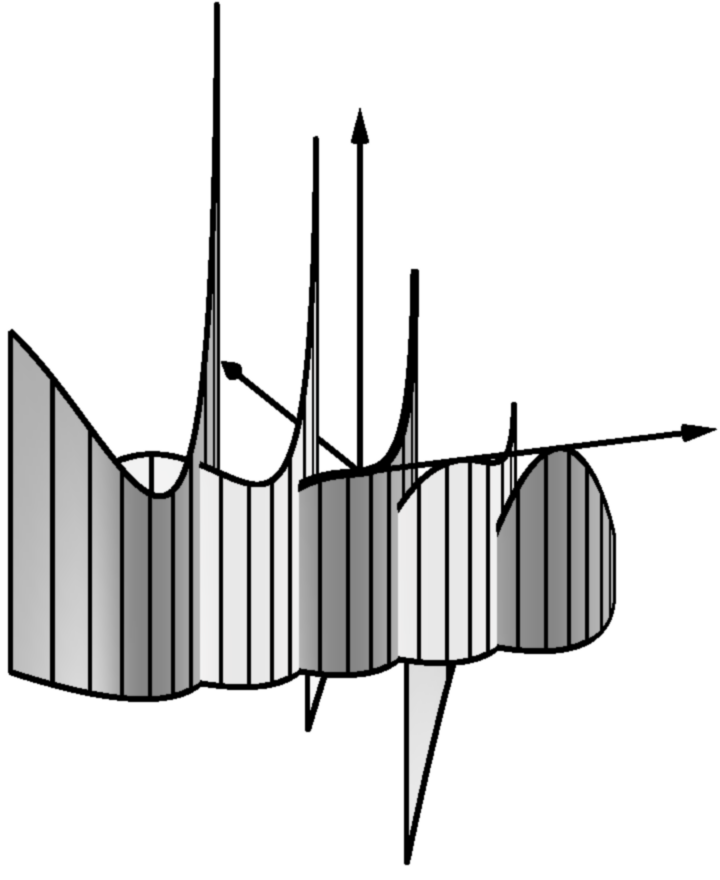}}
         \put(2.15,3.55){\small$C$}
         \put(3.75,2.7){\small$x_1$}
         \put(1.3,3.0){\small$x_2$}
         \put(2.1,4.4){\small$x_3$}
         \put(2.4,2.7){\small$\Psi$}
         \end{picture}
         \refstepcounter{abbildung}\label{abb:3}
         \centerline{\small Figure \ref{abb:3}}
\end{minipage}
\hspace{.7cm}
\begin{minipage}[t]{4.38cm}
         \begin{picture}(4.38,5.0)
         \put(0.0 ,0.0){\includegraphics[height=5cm]{./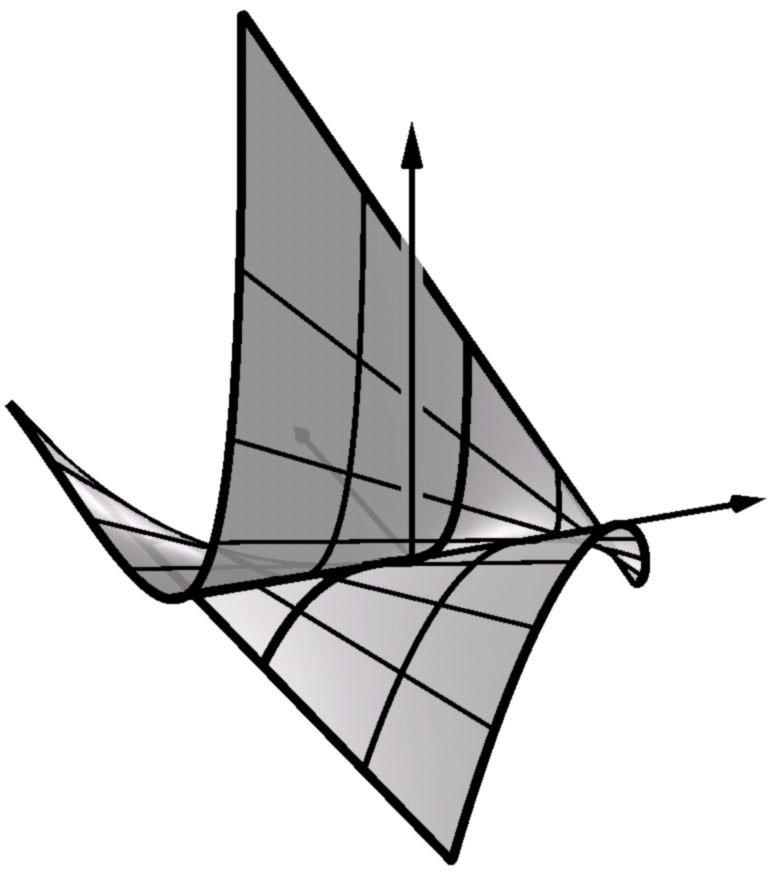}}
         \put(2.6,3.2){\small$C$}
         \put(4.0,2.3){\small$x_1$}
         \put(2.4,4.3){\small$x_3$}
         \end{picture}
         \refstepcounter{abbildung}\label{abb:4}
         \centerline{\small Figure \ref{abb:4}}
\end{minipage}\\
\end{center}
         }%
\par
The matrix $A$ is of type (I.1.2) if, and only if, it can be
written as
\begin{equation}\label{eq:zerlegung12}
    \Mat4{1 & 0 &0 & 0\\
        0 & 1   &0 & a_{13}\\
        0 & 0      &1 & 0\\
        0 & 0      &0      & 1
    }\cdot
    \Mat4{1 & 0 &0 & 0\\
        0 & 1   &0 & 0\\
        0 & 0      &1 & -a_{01}\\
        0 & 0      &0      & 1
    }\cdot
    \Mat4{1 & a_{01} &0 & 0\\
        0 &   1 &0      & 0\\
        0 & 0      &1 & 0\\
        0 & 0      &0      & 1
    }\cdot
    \Mat4{1 & 0 &a_{02} & a_{03}\\
        0 & 1   &0      & 0\\
        0 & 0   &1 & 0\\
        0 & 0   &0      & 1
    }.
\end{equation}
As before, this factorization is unique and the coefficients can
be chosen freely. The first (second) matrix gives a shear with
axis $x_1=0$ ($x_2=0$) in the direction of $p_3$, whereas the
remaining matrices yield a translation in the direction of $p_1$
and a translation parallel to the plane $x_1=0$. However, the
second and the third matrix are linked via the common parameter
$a_{01}$. As $a_{01}$ varies in $\RR$, their products comprise a
one-parameter group $\Gamma_1$ of affine transformations. (See
\cite[I, p.~130]{brau-66-67}, III 3, ``\emph{Nichtisotrope
Cliffordschiebungen}'': All points of the line $x_0=x_2=0$ are
invariant under $\Gamma_1$. All other point orbits are lines of a
parabolic linear congruence with axis $x_0=x_2=0$.) Hence, up to
translations parallel to the plane $x_1=0$, we obtain all twisted
cubics which have third order contact with $C$ at $f$ by applying
to $C$ all shears with axis $x_1=0$ and then all transformations
of the group $\Gamma_1$. In Figure \ref{abb:3} the twisted cubic
$C$ and some of its images under affinities of $\Gamma_1$ are
displayed. These curves lie on parabolic cylinders which are
translates of $\Psi$. Figure \ref{abb:4} shows the ruled surface
which arises by applying $\Gamma_1$ to the curve $C$. The
illustrated lines are point orbits with respect to $\Gamma_1$. In
particular, the $x_1$-axis of the coordinate system is the orbit
of the origin; this line is an edge of regression of the surface.
\par
The matrix $A$ is of type (I.1.2.3) if, and only if, it can be
written as
\begin{equation}\label{eq:zerlegung123}
     \Mat4{1 & 0 &0 & 0\\
        0 & 1   &0 &2a_{02}\\
        0 & 0      &1 & 0\\
        0 & 0      &0      & 1
    }\cdot
        \Mat4{1 & 0 &a_{02} & 0\\
        0 & 1   &0 & 0\\
        0 & 0      &1 & 0\\
        0 & 0      &0  & 1
    }\cdot
    \Mat4{1 & 0 &0 & a_{03}\\
        0 & 1   &0      & 0\\
        0 & 0   &1 & 0\\
        0 & 0   &0      & 1
    }.
\end{equation}
Again, this decomposition is unique and the coefficients can be
chosen arbitrarily. The products of the first and the second
matrix in (\ref{eq:zerlegung123}) comprise a one-parameter
subgroup $\Gamma_2$; cf.\ the remarks above. Hence, up to
translations parallel to the line $x_1=x_2=0$, we obtain all
twisted cubics which have fourth order contact with $C$ at $f$ as
the orbit of $C$ under $\Gamma_2$. Figure \ref{abb:5} illustrates
the twisted cubic $C$ and the cylinder $\Psi$, together with some
of their images under affinities of $\Gamma_2$. Figure \ref{abb:6}
shows the ruled surface which is generated by applying $\Gamma_2$
to the curve $C$. This surface is a proper subset (only the points
of $F\setminus\{f\}$ are missing) of the (ruled) \emph{Cayley
surface} with equation $2x_0x_1x_2-x_1^3=x_0^2x_3$.%
{\unitlength1cm
\begin{center}
\begin{minipage}[t]{3.64cm}
         \begin{picture}(3.64,4.6)
         \put(0.0 ,0.0){\includegraphics[height=5cm]{./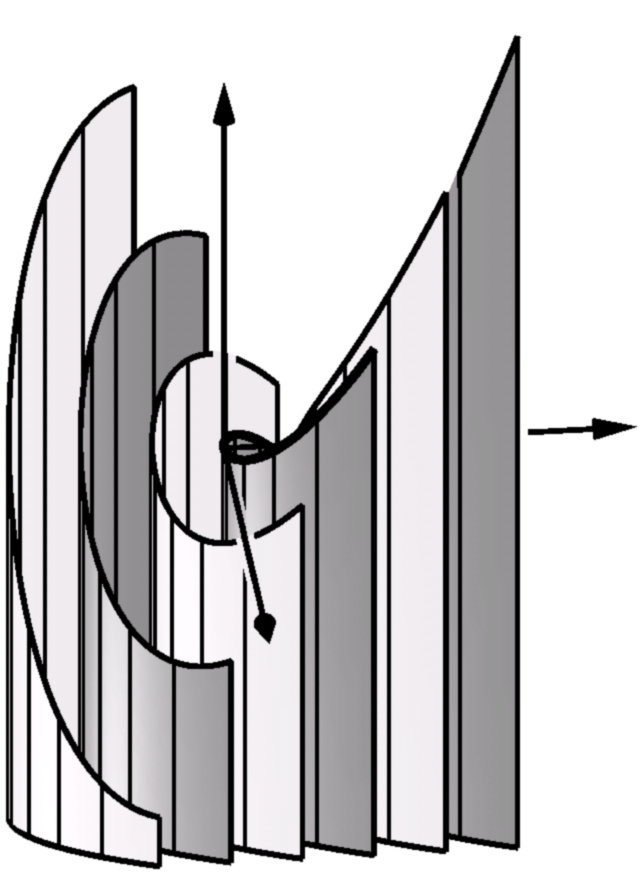}}
         \put(1.8,2.35){\small$C$}
         \put(1.8,0.4){\small$\Psi$}
         \put(3.3,2.75){\small$x_2$}
         \put(1.41,1.05){\small$x$}
         \put(1.58,1.05){\small${}_1$}
         \put(1.4,4.4){\small$x_3$}
         \end{picture}
         \refstepcounter{abbildung}\label{abb:5}
         \centerline{\small Figure \ref{abb:5}}
\end{minipage}
\hspace{2cm}
\begin{minipage}[t]{3.78cm}
         \begin{picture}(3.78,4.5)
         \put(0.0 ,0.0){\includegraphics[height=5cm]{./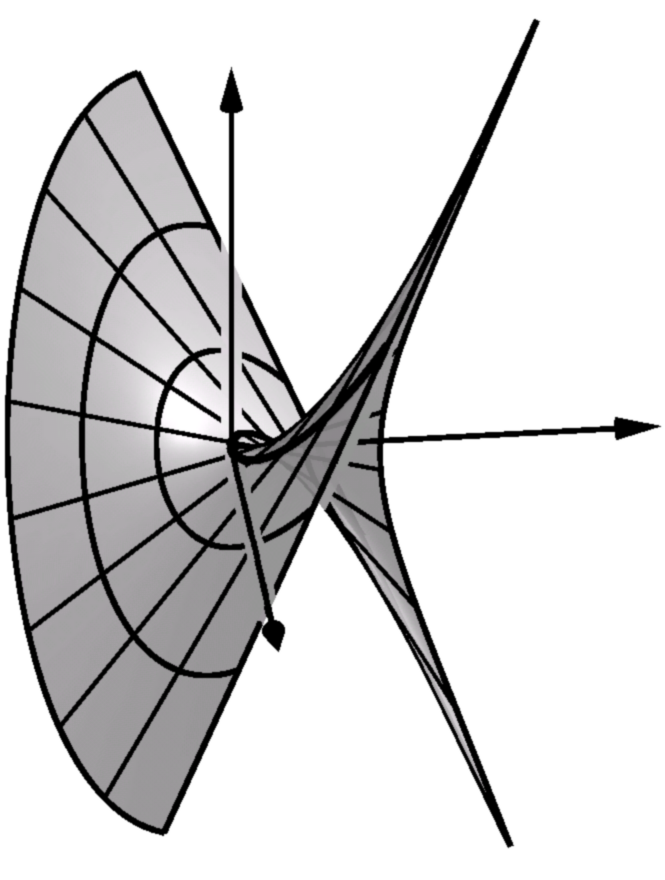}}
         \put(3.35,2.75){\small$x_2$}
         \put(1.45,1.05){\small$x_1$}
         \put(1.45,4.45){\small$x_3$}
         \end{picture}
         \refstepcounter{abbildung}\label{abb:6}
         \centerline{\small Figure \ref{abb:6}}
\end{minipage}
\end{center}}

\section{The Laguerre Geometry
$\Sigma(\RR,\LL)$}\label{sect:laguerre}

Let $\RR[X]$ be the polynomial ring over the reals. The factor
ring $\RR[X]/\langle X^3\rangle=:\LL$  is a $3$-dimensional real
commutative local algebra with an $\RR$-basis $1_\LL,\eps,\eps^2$,
where $\eps:=X+\langle X^3\rangle$. Its non-invertible elements
form the only maximal ideal $N:=\RR\eps+\RR\eps^2$. We consider
$\RR$ as a subring of $\LL$ by identifying $x\in\RR$ with $x\cdot
1_\LL\in \LL$. (Our ring $\LL$ is the ring $\mathfrak L_4$ in
\cite[p.~306]{benz-73}.) Let us recall the definition of the
\emph{projective line over}\/ $\LL$, in symbols $\PP(\LL)$: We
consider the free left $\LL$-module $\LL^2$. A cyclic submodule
$\LL(u,v)\subset \LL^2 $ is a point of $\PP(\LL)$ if, and only if,
$u$ or $v$ is a unit in $\LL$. Two such pairs $(u,v)$ and
$(u',v')$ in $\LL^2$ determine the same point precisely when they
are proportional by a unit in $\LL$. (Cf.\ \cite[p.~785]{herz-95}
for a definition of the projective line over an arbitrary ring
with a unit element.). We embed the real projective line
$\PP(\RR)$ in $\PP(\LL)$ by $\RR(x,y)\mapsto \LL(x,y)$. The point
set of the \emph{chain geometry}\/ $\Sigma(\RR,\LL)$ is the
projective line over $\LL$, the \emph{chains}\/ are the images of
$\PP(\RR)\subset\PP(\LL)$ under the natural right action of
$\GL_2(\LL)$ on $\LL^2$; cf.\ \cite[p.~790]{herz-95}. Since $\LL$
is a local ring, our chain geometry is a \emph{Laguerre
geometry}\/ \cite[p.~793]{herz-95}. If two distinct points of
$\PP(\LL)$ can be joined by a chain then they are said to be
\emph{distant}.  Non-distant points are also said to be
\emph{parallel}\/ ($\parallel$).  Letting $p=\LL(a,b)$ and $q=
\LL(c,d)$ gives
\begin{equation}\label{eq:parallel}
  p\,\parallel q\,\Leftrightarrow\, \det\Mat2{a&b\\c&d}\in N.
\end{equation}
This parallelism is an equivalence relation. There is a unique
chain through any three mutually distant points.
\par
We fix the point $\LL(1,0)=:\infty\in\PP(\LL)$. Then the point set
of $\PP(\LL)$ can be split into two classes: A \emph{proper
point}\/ has the form $\LL(z,1)$, and we identify such a point
with the element $z\in \LL$. The proper points are precisely the
points which are distant (non-parallel) to $\infty$. Every other
point of $\PP(\LL)$ has the form $\LL(1,z)$ with $z\in N$. Such
points are said to be \emph{improper}. Hence we can regard
$\PP(\LL)$ as the real affine $3$-space on $\LL$ together with an
extra ``improper plane'' which is just a copy of the maximal ideal
$N$.
\par
The algebra $\LL$ has two distinguished ideals, namely the maximal
ideal $N$ and its annihilator $\{z\in \LL\mid zN=0\}=\RR\eps^2$.
Accordingly, there are three types of lines: A line $\RR
u+v\subset \LL$, where $u\in \LL\setminus\{0\}$, $v\in \LL$ is
called \emph{singular}\/ if $u\in N$, and \emph{regular}\/
otherwise. A singular line of the form $\RR\eps^2+v$ is said to be
\emph{vertical}. We say that a plane is \emph{regular}\/ provided
that it contains at least one regular line. A \emph{singular}\/
plane is just a non-regular plane. By (\ref{eq:parallel}), the
singular planes are the classes of proper parallel points.
\par
For each subset $\cS\subset\PP(\LL)$ let $\cS^\circ$ be its
\emph{proper part}, i.e. the set of all its proper points. The
following is taken from \textsc{H.-J.~Samaga}
\cite[Satz~4]{sama-76}; cf.\ also \cite{benz+s+s-81}: A subset
$\cC$ of $\PP(\LL)$ is a chain of $\Sigma(\RR,\LL)$ precisely when
one of the following conditions holds:
\begin{equation}\label{eq:gerade}
  \cC=\{t+(a_{02}+a_{12} t)\eps+(a_{03}+a_{13} t)\eps^2\mid
  t\in\RR\}\cup\{\infty\},
\end{equation}
whence $\cC^\circ$ is an affine line;
\begin{equation}\label{eq:parabel}
  \cC=\{t+(a_{02}+a_{12} t)\eps+(a_{03}+a_{13} t+a_{33} t^2)\eps^2\mid t\in\RR, a_{33}\neq 0\}
  \cup \{ \LL(1,-a_{33}\eps^2) \},
\end{equation}
whence $\cC^\circ$ is a parabola;
{\arraycolsep0.5mm\begin{eqnarray}
  \cC&=&\{t+(a_{02}+a_{12} t+a_{22} t^2)\eps+(a_{03}+a_{13} t+a_{23} t^2+a_{33} t^3)\eps^2
          \mid t\in\RR, a_{33}=a_{22}^2\neq  0\}\nonumber\\
  &&{} \cup\{ \LL(1,-a_{22}\eps+(-a_{23}+2a_{12}a_{22})\eps^2)\},\label{eq:kub_parabel}
\end{eqnarray}}
whence $\cC^\circ$ is a cubic parabola. In either case the
$a_{ij}$'s are real constants subject to the conditions stated
above. Obviously, the lines given by (\ref{eq:gerade}) are
precisely the regular ones. So, all regular lines are
\emph{representatives} for the point $\infty$. We say that a
(cubic) parabola in $\LL$ is \emph{admissible}\/ if it is the
proper part of a chain. By (\ref{eq:parabel}), a parabola is
admissible if, and only if, its diameters are vertical lines and
its plane is regular. Each admissible parabola is a representative
of a unique improper point. We describe admissible parabolas which
determine the same improper point:

\begin{thm}\label{thm:parabeln}
Let $\cC^\circ$ and $\welle{\cC}^\circ$ be admissible parabolas
of\/ $\LL$. Then the chains $\cC$ and $\welle{\cC}$ have the same
improper point if, and only if, the parallel projection of
$\welle{\cC}^\circ$ onto the plane of $\cC^\circ$, in the
direction of an arbitrary non-vertical singular line, is a
translate of $\cC^\circ$.
\end{thm}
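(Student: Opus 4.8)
The plan is to reduce everything to the normal form (\ref{eq:parabel}). Writing a general element of $\LL$ as $x_1+x_2\eps+x_3\eps^2$ and using $(x_1,x_2,x_3)$ as affine coordinates, an admissible parabola $\cC^\circ$ with data $a_{02},a_{12},a_{03},a_{13},a_{33}$ (where $a_{33}\neq 0$) is parametrized by $x_1=t$, $x_2=a_{02}+a_{12}t$, $x_3=a_{03}+a_{13}t+a_{33}t^2$; hence its (unique) plane is $P\colon x_2=a_{02}+a_{12}x_1$, and inside $P$, where $x_2$ is a function of $x_1$, the curve $\cC^\circ$ is the graph $x_3=a_{03}+a_{13}x_1+a_{33}x_1^2$. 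By (\ref{eq:parabel}) the improper point of the chain $\cC$ is $\LL(1,-a_{33}\eps^2)$, so $\cC$ and $\welle{\cC}$ share their improper point exactly when $a_{33}=\welle{a}_{33}$. Thus the assertion amounts to showing that the parallel projection of $\welle{\cC}^\circ$ onto $P$, along a non-vertical singular line, is a translate of $\cC^\circ$ if and only if $a_{33}=\welle{a}_{33}$.

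Next I would pin down which projections are meaningful. The direction space of $P$ consists of the vectors $(v_1,v_2,v_3)$ with $v_2=a_{12}v_1$; it contains $\eps^2=(0,0,1)$, which is exactly why the vertical direction has to be excluded, but it contains no non-vertical singular vector $(0,v_2,v_3)$ with $v_2\neq 0$. Hence parallel projection of the affine $3$-space onto $P$ along a non-vertical singular line is a well-defined affine map for every such line, and a non-vertical singular direction may be normalized to $\RR u$ with $u=\eps+c\eps^2$, $c\in\RR$ arbitrary.

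The computational heart is to project the generic point $\welle{p}(s)=s+(\welle{a}_{02}+\welle{a}_{12}s)\eps+(\welle{a}_{03}+\welle{a}_{13}s+\welle{a}_{33}s^2)\eps^2$ of $\welle{\cC}^\circ$ onto $P$ along $u$. Since $u$ has no $1$-component, the $x_1$-coordinate is left unchanged, $x_1=s$; solving $\welle{p}(s)+\lambda u\in P$ yields $\lambda$ as a linear function of $s$, and substituting shows that the image is the graph, within $P$, of $x_3=b_0+b_1x_1+\welle{a}_{33}x_1^2$, where the constants $b_0,b_1$ are explicit and depend on $c$, but the leading coefficient is again $\welle{a}_{33}$, independent of $c$. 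So the projected curve is a parabola in $P$ with the same leading coefficient $\welle{a}_{33}$ as $\welle{\cC}^\circ$.

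It remains to compare this with $\cC^\circ$. A translation carrying a parabola of $P$ to a parabola of $P$ must have its vector in the direction space of $P$ (the difference of two of their points lies in that direction space); writing such a vector as $(\alpha,\alpha a_{12},\beta)$ and applying it to the graph $x_3=a_{03}+a_{13}x_1+a_{33}x_1^2$, one obtains after reparametrizing the graph $x_3=A_0+A_1x_1+a_{33}x_1^2$ with $A_1=a_{13}-2a_{33}\alpha$, and $A_0,A_1$ range over all of $\RR$ as $\alpha,\beta$ vary. Hence a parabola $x_3=b_0+b_1x_1+\welle{a}_{33}x_1^2$ of $P$ is a translate of $\cC^\circ$ precisely when $\welle{a}_{33}=a_{33}$. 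Combining the two steps, the projection of $\welle{\cC}^\circ$ is a translate of $\cC^\circ$ iff $\welle{a}_{33}=a_{33}$ iff $\cC$ and $\welle{\cC}$ have the same improper point; and since $\welle{a}_{33}$ did not depend on the chosen $c$, the equivalence holds for an arbitrary non-vertical singular direction, as required. The main obstacle is organizing the projection in the third step so that it is transparent that (a) singularity of the direction is precisely what keeps $x_1$ fixed, and (b) the leading coefficient $\welle{a}_{33}$ passes through the projection untouched; the rest is bookkeeping about graphs of quadratics.
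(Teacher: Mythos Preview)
Your argument is correct and follows essentially the same route as the paper's proof: parametrize both admissible parabolas via (\ref{eq:parabel}), observe that equality of improper points amounts to $a_{33}=\welle a_{33}$, project $\welle\cC^\circ$ onto the plane of $\cC^\circ$ along a non-vertical singular direction, and check that the resulting parabola has leading coefficient $\welle a_{33}$, so that it is a translate of $\cC^\circ$ iff $\welle a_{33}=a_{33}$. You simply make explicit the ``easy calculation'' the paper leaves to the reader, and you add the useful sanity checks that the chosen direction never lies in the plane of $\cC^\circ$ and that the outcome is independent of the parameter $c$.
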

\begin{proof}
Let $\cC$ and $\welle{\cC}$ be given according to
(\ref{eq:parabel}) with coefficients $a_{ij}$ and $\welle a_{ij}$,
respectively. The parallel projection of $\welle \cC^\circ$ onto
the plane of $\cC^\circ$ is a parabola
\begin{equation*}
 \{t+(a_{02}+a_{12} t)\eps
    +(a_{03}^*+a_{13}^* t+a_{33}^* t^2)\eps^2
    \mid t\in\RR\}
 \mbox{ with } a_{33}^*=\welle{a}_{33}.
\end{equation*}
An easy calculation shows that the projected parabola is a
translate of $\cC^\circ$ if, and only if, $\welle a_{33}=a_{33}$.
By (\ref{eq:parabel}), this is necessary and sufficient for $\cC$
and $\welle \cC$ to have the same improper point.
\end{proof}

Let us consider the \emph{projective closure}\/ $\PP_3(\RR)$ of
the affine space on $\LL$, where we do not distinguish between
$\RR(1,x_1,x_2,x_3)\in\PP_3(\RR)$ and $x_1+x_2\eps+x_3\eps^2\in
\LL$. Since we are going to work with \emph{two different
extensions}\/ of the affine space on $\LL$, we reserve the phrases
``at infinity'' and ``improper'' for the projective closure and
for the chain-geometric closure, respectively. If $\cC$ is a chain
of $\Sigma(\RR,\LL)$ then $\cC^+\subset\PP_3(\RR)$ denotes that
unique projective line or conic or twisted cubic which contains
$\cC^\circ$. We denote by $f$, $F$, and $\Phi$ the point at
infinity of the vertical line $\RR\eps$, the line at infinity of
the singular plane $N$, and the plane at infinity, respectively.
\par
Let $\cC$ be a chain. If $\cC^\circ$ is a line then
$\cC^+\not\subset\Phi$ is a projective line with a point at
infinity not on $F$ and vice versa. If $\cC^\circ$ is a parabola
then $\cC^+\not\subset\Phi$ is a conic through $f$ touching a line
at infinity other than $F$. As before, all such conics arise from
chains. We note that when $\cC^\circ$ and $\welle{\cC}^\circ$ are
parabolas in the same plane then the existence of a translation
taking $\cC^\circ$ to $\welle{\cC}^\circ$ just means that the
projective conics $\cC^+$ and $\welle{\cC}^+$ have contact of
second order at the point $f$. See, e.g., \cite{pauk-79}. But,
since admissible parabolas in different planes may represent the
same improper point, we cannot always describe improper points in
terms of conics with second order contact at infinity. Now we turn
to the case when $\cC^\circ$ is a cubic parabola:
\begin{thm}\label{thm:3}
The cubic parabola
\begin{equation}\label{eq:standard_kub_parabel}
  \{t+t^2\eps+ t^3\eps^2\mid t\in\RR\}
\end{equation}
is admissible. A cubic parabola of\/ $\LL$ is admissible if, and
only if, its projective extension and the projective extension of
(\ref{eq:standard_kub_parabel}) have contact of second order at
the point $f=\RR(0,0,0,1)$.
\end{thm}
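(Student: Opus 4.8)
The plan is to transport everything into the collineation picture of Section~\ref{sect:kubiken}. The starting observation is that the projective extension of the cubic parabola (\ref{eq:standard_kub_parabel}) is exactly the standard twisted cubic $C$ of (\ref{eq:kubik-homogen}): identifying $x_1+x_2\eps+x_3\eps^2$ with $\RR(1,x_1,x_2,x_3)$ and writing $u=t/s$ one has $\RR(1,u,u^2,u^3)=\RR(s^3,s^2t,st^2,t^3)$, and the point at infinity of (\ref{eq:standard_kub_parabel}) (reached as $u\to\infty$, i.e.\ $s=0$ in the arc $g$ used in the proof of Theorem~\ref{thm:2}) is $f=\RR(0,0,0,1)$. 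From (\ref{eq:kubik-homogen}) one reads off that $F\colon x_0=x_1=0$ is the tangent of $C$ at $f$ and $\Phi\colon x_0=0$ its osculating plane at $f$. The first assertion is then immediate: specialising (\ref{eq:kub_parabel}) to $a_{02}=a_{12}=a_{03}=a_{13}=a_{23}=0$, $a_{22}=a_{33}=1$ is legitimate since $a_{33}=1=a_{22}^2\neq 0$, and it exhibits (\ref{eq:standard_kub_parabel}) as the proper part of a chain (with improper point $\LL(1,-\eps)$), so (\ref{eq:standard_kub_parabel}) is admissible.

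For the equivalence I would argue both directions through a collineation of type~(I). Suppose a cubic parabola $\cC^\circ$ of $\LL$ is such that $\cC^+$ has second order contact with $C$ at $f$. Contact of order~$2$ forces $\cC^+$ to pass through $f$ and to have there the same tangent $F$ and the same osculating plane $\Phi$ as $C$ (for the latter note that the osculating plane of a cubic parabola of $\LL$ at its point at infinity is $\Phi$). Hence Theorem~\ref{thm:kubik}(I) supplies a collineation $\kappa$ with $C^\kappa=\cC^+$ leaving every line of $\cL(f,\Phi)$ invariant, so $\kappa$ has a matrix $A$ of type~(I); and by Theorem~\ref{thm:2} the second order contact is equivalent to the first row of (\ref{eq:tabelle}) in column~(I), i.e.\ $a_{33}=a_{11}$, so $A$ is of type~(I.1). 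Evaluating $(s^3,s^2,s,1)\cdot A$, dividing by $s^3$ and setting $\tau=1/s$ presents $\cC^\circ=C^\kappa$ in affine coordinates as
\[
  \tau\longmapsto (a_{01}+a_{11}\tau)+(a_{02}+a_{11}\tau^2)\eps
    +(a_{03}+a_{13}\tau+a_{23}\tau^2+a_{33}\tau^3)\eps^2 ;
\]
re-parametrising by $t:=a_{01}+a_{11}\tau$ brings this into the normal form (\ref{eq:kub_parabel}) whose $\eps$-coefficient has leading term $a_{11}^{-1}t^2$ and whose $\eps^2$-coefficient has leading term $(a_{33}/a_{11}^3)\,t^3=a_{11}^{-2}t^3$ (here the type~(I.1) relation $a_{33}=a_{11}$ is used). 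The leading coefficients $a_{11}^{-1}$ and $a_{11}^{-2}$ are nonzero and satisfy the admissibility relation, so $\cC^\circ$ is admissible.

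Conversely, given an admissible $\cC^\circ$ presented by (\ref{eq:kub_parabel}) with $a_{33}=a_{22}^2\neq 0$, I would reverse the construction: put the matrix entry $a_{11}:=a_{22}^{-1}$, observe that matching the leading $\eps^2$-coefficient then forces $a_{33}=a_{11}$ in the matrix (hence type~(I.1)), and recover the remaining entries $a_{01},a_{02},a_{03},a_{13},a_{23}$ successively from the triangular linear system produced by the substitution $t=a_{01}+a_{11}\tau$. This realises $\cC^+$ as $C^\kappa$ with $\kappa$ of type~(I.1), and Theorem~\ref{thm:2} (first row, column~(I)) yields that $\cC^+$ and $C$ have second order contact at $f$.

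The only genuine computation is the back-and-forth reparametrisation between $s$ (or $\tau$) and $t$, together with the bookkeeping that matches the six free entries of a type~(I.1) matrix with the six free coefficients $a_{02},a_{12},a_{22},a_{03},a_{13},a_{23}$ of (\ref{eq:kub_parabel}); everything else is a direct appeal to Theorems~\ref{thm:kubik} and~\ref{thm:2}. I expect the step needing the most care to be the very first move in the forward direction, namely producing a collineation $\kappa$ of type~(I)---not merely one fixing $f$, $F$ and $\Phi$---with $C^\kappa=\cC^+$; this is exactly what the uniqueness clause of Theorem~\ref{thm:kubik}(I) guarantees, and it is what makes the matrix criterion of Theorem~\ref{thm:2} applicable.
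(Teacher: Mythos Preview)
Your argument is correct, but it takes a detour compared with the paper's proof. The paper observes that the collineations of type~(III) are tailor-made for the normal form (\ref{eq:kub_parabel}): applying a type~(III) matrix to $(s^3,s^2,s,1)$ and dividing by $s^3$ with $t=1/s$ gives \emph{directly}
\[
  (1,\,t,\,a_{02}+a_{12}t+a_{22}t^2,\,a_{03}+a_{13}t+a_{23}t^2+a_{33}t^3),
\]
so the six matrix entries $a_{02},a_{12},a_{22},a_{03},a_{13},a_{23},a_{33}$ literally \emph{are} the coefficients of (\ref{eq:kub_parabel}), and the type~(III.1) condition $a_{33}=a_{22}^2$ from Theorem~\ref{thm:2} is verbatim the admissibility condition. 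No reparametrisation, no triangular system to invert --- one sentence suffices. Your route via type~(I) matrices is equally valid (Theorem~\ref{thm:kubik} guarantees that any of the three types will work), but the first coordinate of $C^\kappa$ becomes $a_{01}+a_{11}\tau$ rather than $t$, forcing the substitution $t=a_{01}+a_{11}\tau$ and the bookkeeping you describe. What your approach buys is a nice consistency check: the relation $a_{33}=a_{11}$ of type~(I.1) transforms under the reparametrisation into $a_{22}^2=a_{33}$, confirming that the three columns of table~(\ref{eq:tabelle}) really do encode the same geometric condition.
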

\begin{proof}
By (\ref{eq:kub_parabel}), there is a unique chain $\cD$, say,
such that $\cD^\circ$ coincides with the cubic parabola
(\ref{eq:standard_kub_parabel}). Its projective extension $\cD^+$
is given by (\ref{eq:kubik-homogen}), whence it is a twisted cubic
through $f$, with tangent line $F$, and osculating plane $\Phi$.
Now we apply those collineations of $\PP_3(\RR)$ which are given
by regular matrices of type (III.1). So we get all the twisted
cubics which have second order contact with $\cD^+$ at $f$ and, by
(\ref{eq:kub_parabel}), these are precisely the projectively
extended admissible cubic parabolas.
\end{proof}
\begin{thm}\label{thm:4}
Let $\cC^\circ$ and $\welle{\cC}^\circ$ be admissible cubic
parabolas of\/ $\LL$. Then the chains $\cC$ and $\welle{\cC}$ have
the same improper point if, and only if, the extended curves
$\cC^+$ and $\welle{\cC}^+$ have contact of third order at
$f=\RR(0,0,0,1)$.
\end{thm}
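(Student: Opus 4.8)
The plan is to reduce Theorem~\ref{thm:4} to Theorem~\ref{thm:2}, using the same device as in the proof of Theorem~\ref{thm:3}. First I would record the precise correspondence between chains and matrices: substituting $t=1/s$ in (\ref{eq:kub_parabel}) and clearing denominators shows that an admissible cubic parabola $\cC^\circ$ with coefficients $a_{ij}$ (where $a_{33}=a_{22}^2\neq0$) extends to $\cC^+=C^\kappa$, where $C$ is the standard twisted cubic (\ref{eq:kubik-homogen}) and $\kappa$ is the collineation whose matrix is the matrix of type~(III) in (\ref{eq:matrizen}) with precisely these entries $a_{ij}$; the side condition $a_{33}=a_{22}^2$ is exactly ``type~(III.1)''. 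In the same way $\welle{\cC}^+=C^{\welle{\kappa}}$ with a type-(III.1) matrix $\welle{A}$ having entries $\welle{a}_{ij}$. Recall that every collineation of type~(III) fixes $f=\RR(0,0,0,1)$ and that collineations preserve the order of contact of two curves at a fixed common point.

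Next I would pass from the pair $(\cC^+,\welle{\cC}^+)$ to the pair $(C,C^\mu)$ by applying $\kappa^{-1}$, where $\mu:=\welle{\kappa}\kappa^{-1}$ has matrix $B:=\welle{A}A^{-1}$. Since $\kappa^{-1}$ is again of type~(III) it fixes $f$, so $\cC^+$ and $\welle{\cC}^+$ have contact of order $3$ at $f$ if and only if $C$ and $C^\mu$ do. A short block multiplication shows that $B$ is again a regular matrix in the \emph{standard} shape of type~(III), with diagonal entries $b_{22}=\welle{a}_{22}/a_{22}$ and $b_{33}=\welle{a}_{33}/a_{33}$; since $a_{33}=a_{22}^2$ and $\welle{a}_{33}=\welle{a}_{22}^2$ this gives $b_{33}=b_{22}^2$, i.e.\ $B$ is of type~(III.1). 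Hence the first row of the table (\ref{eq:tabelle}) holds automatically, and by Theorem~\ref{thm:2} the cubics $C$ and $C^\mu$ have contact of order $3$ at $f$ precisely when in addition the second row holds: $b_{22}=1$ and $b_{23}=2b_{12}$.

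Finally I would read off the relevant entries of $B=\welle{A}A^{-1}$ and translate these two equations back. Inverting $A$ (its lower right $2\times2$ block is upper triangular with diagonal $(a_{22},a_{33})$) and multiplying by $\welle{A}$ yields $b_{22}=\welle{a}_{22}/a_{22}$, $b_{12}=(\welle{a}_{12}-a_{12})/a_{22}$, and $b_{23}=(a_{22}\welle{a}_{23}-\welle{a}_{22}a_{23})/(a_{22}a_{33})$. Thus $b_{22}=1$ is equivalent to $\welle{a}_{22}=a_{22}$, and, using $a_{33}=a_{22}^2$, the equation $b_{23}=2b_{12}$ then becomes $\welle{a}_{23}-a_{23}=2a_{22}(\welle{a}_{12}-a_{12})$, i.e.\ $2\welle{a}_{22}\welle{a}_{12}-\welle{a}_{23}=2a_{22}a_{12}-a_{23}$. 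Together these two conditions say exactly that
\[
  \LL\bigl(1,\,-a_{22}\eps+(2a_{12}a_{22}-a_{23})\eps^2\bigr)
   =\LL\bigl(1,\,-\welle{a}_{22}\eps+(2\welle{a}_{12}\welle{a}_{22}-\welle{a}_{23})\eps^2\bigr),
\]
which by (\ref{eq:kub_parabel}) is precisely the assertion that the chains $\cC$ and $\welle{\cC}$ have the same improper point; this completes the argument.

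The work is essentially bookkeeping, and the step I expect to cost the most care is the last one: one must verify that the matrix attached to a chain via (\ref{eq:kub_parabel}) genuinely has the standard shape of type~(III) (so that Theorem~\ref{thm:2} applies verbatim), and the reduction of $b_{23}=2b_{12}$ to the improper-point identity must be carried out using the admissibility relations $a_{33}=a_{22}^2$, $\welle{a}_{33}=\welle{a}_{22}^2$ together with the already-derived $\welle{a}_{22}=a_{22}$ — only then does it collapse. The one conceptual ingredient worth stating explicitly is that order of contact at a point is invariant under collineations fixing that point, which is what legitimizes the passage via $\kappa^{-1}$.
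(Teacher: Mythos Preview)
Your argument is correct. It differs from the paper's proof mainly in how the reduction to Theorem~\ref{thm:2} is carried out. The paper first treats the special chain $\cD$ through $\LL(1,-\eps)$ (so $\cD^+=C$), observes that the chains sharing this improper point are exactly those whose type-(III) matrix satisfies the row-1 and row-2 conditions of (\ref{eq:tabelle}), invokes transitivity of third-order contact, and then, for a general improper point $\LL(1,-a\eps-b\eps^2)$, normalizes via a $\GL_2(\RR)$-projectivity of $\PP(\LL)$ whose action on proper points is the affinity $z\mapsto za-\frac{b}{a}$. Your route stays entirely on the projective side: you normalize $\cC^+$ to $C$ by $\kappa^{-1}$ and read off the conditions directly from the entries of $B=\welle A\,A^{-1}$. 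This buys you a self-contained computation that avoids both the transitivity-of-contact argument and the chain-geometric group action; the paper's version, in exchange, makes visible the structural fact that the automorphism group of $\Sigma(\RR,\LL)$ already acts transitively on the relevant improper points, which is conceptually pleasant but not logically required here.
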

\begin{proof} (a) First, we consider that chain $\cD$ which yields the cubic
parabola (\ref{eq:standard_kub_parabel}). The improper point of
$\cD$ is $\LL(1,-\eps)$. Now we apply those collineations of
$\PP_3(\RR)$ which are given by regular matrices of type
(III.1.2). This gives, by Theorem \ref{thm:2}, precisely those
twisted cubics which have third order contact with $\cD^+$ at $f$
and, by (\ref{eq:kub_parabel}), we get all the projectively
extended cubic parabolas that arise from the chains through
$\LL(1,-\eps)$. Since contact of third order is a transitive
notion, the assertion follows for all chains $\cC$ through
$\LL(1,-\eps)$.
\par
(b) Next, let $\cC$ be any chain whose proper part is a cubic
parabola, so that  its improper point can be written as
$\LL(1,-a\eps-b\eps^2)$, where $a,b\in\RR$ and $a\neq 0$. The
matrix
\begin{equation*}
    \alpha:=\left(\begin{array}{rr}
    a & 0 \\ -\frac{b}{\raisebox{0.8mm}{$\scriptstyle a$}} & 1
    \end{array}\right)
    \in\GL_2(\RR)\subset\GL_2(\LL)
\end{equation*}
induces a projectivity of $\PP(\LL)$ taking the improper point
$\LL(1,-a\eps-b\eps^2)$ to
\begin{equation*}
 \LL\Big(\Big(\frac 1 a-\frac b{a^2}\,\eps\Big)\Big(a+b\eps+\frac
{b^2}{a}\,\eps^2,-a\eps-b\eps^2\Big)\Big)
  = \LL(1,-\eps).
\end{equation*}
The action of $\alpha$ on the proper points is the affine
transformation $\LL\to \LL: z\mapsto za-\frac{b}{a}$ which in turn
can be extended to a collineation of $\PP_3(\RR)$. Since contact
of any order is preserved under collineations, we can apply the
results from (a) in order to complete the proof.
\end{proof}
From the affine point of view, the previous results are not
satisfying, because they are formulated in projective terms.
However, in Section \ref{sect:kubiken} we have explained how one
can ``see'' contact of higher order at $f$ via an affine
transformation taking $\welle \cC^\circ$ to $\cC^\circ$. Another
basic topic is to characterize chains $\cC$ and $\welle \cC$ which
\emph{touch}\/ at a common improper point. If $\cC^\circ$ is an
affine line then this means, by definition, that $\cC^\circ$ and
$\welle \cC$ are parallel lines. If $\cC^\circ$ is a parabola then
a characterization as in Theorem \ref{thm:2} can be given, but now
the parallel projection of $\welle \cC^\circ$ has to arise from
$\cC^\circ$ under a translation in the direction of $\eps^2$.
(This means contact of third order at $f$.) Likewise, Theorem
\ref{thm:4} can be modified as to describe touching chains, by
replacing ``third order contact'' with ``fourth order contact''.
The proofs are left to the reader. The affine space on $\LL$ is
closely related with the \emph{flag space}\/ (\emph{two-fold
isotropic space}\/), as the triple $(f,F,\Phi)$ can be considered
as its \emph{absolute flag}. Cf., among others, the papers
\cite{brau-66-67} by \textsc{H.~Brauner}. Due to lack of space we
have to refrain from presenting here the interesting connections
between these two geometries.


{\small
  Hans Havlicek and Klaus List\\
  Institut f\"ur Geometrie\\
  Technische Universit\"at\\
  Wiedner Hauptstra{\ss}e 8--10\\
  A-1040 Wien\\
  Austria
}
\end{document}